\newtheorem{theorem}{Theorem}[section]
\newtheorem{lemma}{Lemma}[section]
\newtheorem{fact}{Fact}[section]
\theoremstyle{definition}
\begin{document}

\begin{center}
\vskip 1cm{\LARGE\bf{An Arithmetic Function Arising from the Dedekind $\psi$ Function} 
\vskip 1cm
\large
Colin Defant\\
Department of Mathematics\\
University of Florida\\
United States\\
cdefant@ufl.edu}
\end{center}
\vskip .2 in

\begin{abstract}
We define $\overline{\psi}$ to be the multiplicative arithmetic function that satisfies \[\overline{\psi}(p^{\alpha})=\begin{cases} p^{\alpha-1}(p+1), & \mbox{if } p\neq 2; \\ p^{\alpha-1}, & \mbox{if } p=2 \end{cases}\]  for all primes $p$ and positive integers $\alpha$. Let $\lambda(n)$ be the number of iterations of the function $\overline{\psi}$ needed for $n$ to reach $2$. It follows from a theorem due to White that $\lambda$ is additive. Following Shapiro's work on the iterated $\varphi$ function, we determine bounds for $\lambda$. We also use the function $\lambda$ to partition the set of positive integers into three sets $S_1,S_2,S_3$ and determine some properties of these sets. 
\end{abstract} 

\bigskip

\noindent 2010 {\it Mathematics Subject Classification}:  Primary 11A25; Secondary 11B83. 

\noindent \emph{Keywords: Iterated function; Dedekind function; additive function.} 
\section{Introduction} 
Throughout this paper, we let $\mathbb{N}$, $\mathbb{N}_0$, and $\mathbb{P}$ denote the set of positive integers, the set of nonnegative integers, and the set of prime numbers, respectively. For any prime $p$ and positive integer $x$, we let $v_p(x)$ denote the exponent of $p$ in the prime factorization of $x$.
\par 
The study of iterated arithmetic functions has played an important role in number theory over the past century. In particular, Euler's totient function has received considerable attention due to the fact that iterates of this function eventually become constant. In 1943, H. Shapiro studied a function $C(n)$ that counts the number of iterations of Euler's totient function needed for $n$ to reach $2$, and he showed that this function is additive \cite{shapiro43}. Many scholars have since built upon Shapiro's work. For example, White found a class of functions related to Euler's totient function which have the property that the appropriate analogues of Shapiro's function $C$ are additive \cite{white62}. Our goal is to study the iterates of one such function which is related to the Dedekind $\psi$ function.  
\par 
The Dedekind $\psi$ function is a multiplicative arithmetic function that satisfies $\psi(p^{\alpha})=p^{\alpha-1}(p+1)$ for all primes $p$ and positive integers $\alpha$. This function, which Dedekind introduced in order to aid in the study of modular functions, has found numerous applications in various areas of mathematics, especially those closely related to group theory \cite{hampejs14, shimura71}. We will define $\overline{\psi}$ to be the multiplicative arithmetic function that satisfies \[\overline{\psi}(p^{\alpha})=\begin{cases} p^{\alpha-1}(p+1), & \mbox{if } p\neq 2; \\ p^{\alpha-1}, & \mbox{if } p=2 \end{cases}\]  for all primes $p$ and positive integers $\alpha$. One may wish to think of the function $\overline{\psi}$ as a sort of hybrid of the Euler $\varphi$ function and the Dedekind $\psi$ function.  Indeed, $\overline{\psi}$ maps $p^{\alpha}$ to $\psi(p^{\alpha})$ when $p$ is odd and maps $2^{\alpha}$ to $\varphi(2^{\alpha})=2^{\alpha-1}$. Observe that if $n>2$, then $\overline{\psi}(n)$ is even just as $\varphi(n)$ is even. The reason for modifying the $\psi$ function to create the function $\overline{\psi}$ in this manner is to ensure that iterates of any positive integer $n$ eventually become constant. We will make this notion more precise after some definitions.  
\par 
For any function $f\colon\mathbb{N}\rightarrow\mathbb{N}$, we define $f^0(n)=n$ and $f^k(n)=f(f^{k-1}(n))$ for all positive integers $n$ and $k$. We refer to the sequence $n,f(n),f^2(n),\ldots$ as the \emph{trajectory of $n$ under $f$}. If there exist positive integers $K$ and $c$ such that $f^k(n)=c$ for all integers $k\geq K$, then we say the trajectory of $n$ under $f$ \emph{collapses} to $c$. If the trajectory of $n$ under $f$ collapses to $c$, then the \emph{iteration length of $n$ under $f$} is the smallest positive integer $k$ such that $f^k(n)=c$. For example, the trajectory of any positive integer under $\varphi$ collapses to $1$. The iteration lengths of $1$ and $2$ under $\varphi$ are both $1$, while the iteration length of $100$ under $\varphi$ is $6$. 
\par 
Inspired by the Euler $\varphi$ function, White defined a function of \emph{finite index} to be any arithmetic function $f$ such that the trajectory of any positive integer under $f$ collapses to $1$ \cite{white62}. White then proved that any multiplicative arithmetic function satisfying 
\begin{equation} \label{Eq1}
q\vert f(p^{\alpha})\Rightarrow q\leq p \text{ for all } p,q\in\mathbb{P} \text{ and }\alpha\in\mathbb{N}
\end{equation} and 
\begin{equation} \label{Eq2}
p^{\alpha}\nmid f(p^{\alpha})\text{ for all }p\in\mathbb{P}\text{ and }\alpha\in\mathbb{N} 
\end{equation} is of finite index. It is clear that $\varphi$ satisfies \eqref{Eq1} and \eqref{Eq2}. The function $\psi$ satisfies \eqref{Eq2}, but does not satisfy \eqref{Eq1} in the case $p=2$, $q=3$. This makes perfect sense because $\psi(n)>n$ for all integers $n>1$, which implies that $\psi$ cannot be of finite index. In fact, it is easy to show that the trajectory of any integer $n>1$ under $\psi$ has a tail of the form $2^a3^b,2^{a+1}3^b,2^{a+2}3^b,\ldots$ for some $a,b\in\mathbb{N}$. We choose to study $\overline{\psi}$ instead of $\psi$ because $\overline{\psi}$ does satisfy \eqref{Eq1} and \eqref{Eq2} (and is, therefore, of finite index). For $n>1$, we let $\lambda(n)$ denote the unique nonnegative integer satisfying $\overline{\psi}^{\lambda(n)}(n)=2$ (it is easy to see that such an integer must exist and be unique), and we define $\lambda(1)=0$. Therefore, $\lambda(n)$ is simply one less than the iteration length of $n$ under $\overline{\psi}$. We say that $n$ is in class $k$ if $\lambda(n)=k$. In order to facilitate calculations, we define a function $D\colon\mathbb{N}\rightarrow\mathbb{N}_0$ by \[D(x)=\begin{cases} \lambda(x), & \mbox{if } x\equiv 1\pmod 2; \\ \lambda(x)+1, & \mbox{if } x\equiv 0\pmod 2. \end{cases}\] It follows from a theorem of White that the function $D$ is completely additive \cite[Theorem 2]{white62}. That is, $D(xy)=D(x)+D(y)$ for all $x,y\in\mathbb{N}$. In particular, observe that $D(2^{\alpha})=\alpha$ for all positive integers $\alpha$. We now proceed to determine some properties of the classes into which the function $\lambda$ partitions $\mathbb{N}$. 
\section{The Structures of Classes}
We begin by determining special values that provide bounds for the numbers in a given class. Define a function $g\colon\mathbb{N}\backslash\{1\}\rightarrow\mathbb{N}$ by \[g(k)=\begin{cases} 5^{k/3}, & \mbox{if } k\equiv 0\pmod 3; \\ 9\cdot 5^{(k-4)/3}, & \mbox{if } k\equiv 1\pmod 3; \\ 3\cdot 5^{(k-2)/3}, & \mbox{if } k\equiv 2\pmod 3. \end{cases}\]
\begin{lemma} \label{Lem2.1} 
For all $m_1,m_2\in\mathbb{N}\backslash\{1\}$, we have $g(m_1)g(m_2)\geq g(m_1+m_2)$. 
\end{lemma}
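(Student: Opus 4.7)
The plan is to exploit the quasi-geometric structure of $g$: a short check of the definition gives $g(k+3)=5\,g(k)$ for every $k\in\mathbb{N}\setminus\{1\}$, so $g$ grows essentially like $5^{k/3}$ up to a bounded correction depending only on $k\bmod 3$. I would make this precise by writing $g(k)=h(k)\cdot 5^{k/3}$, where
\[
h(k)=\begin{cases} 1, & k\equiv 0\pmod 3;\\ 9\cdot 5^{-4/3}, & k\equiv 1\pmod 3;\\ 3\cdot 5^{-2/3}, & k\equiv 2\pmod 3. \end{cases}
\]
The point of this reformulation is that $5^{m_1/3}\cdot 5^{m_2/3}=5^{(m_1+m_2)/3}$ exactly, so the desired inequality $g(m_1)g(m_2)\ge g(m_1+m_2)$ is equivalent to $h(m_1)h(m_2)\ge h(m_1+m_2)$, a statement depending only on the residues of $m_1$ and $m_2$ modulo $3$.

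This reduces the problem to a finite check. By symmetry there are six cases. The three in which at least one of $m_1,m_2$ is divisible by $3$ give equality since $h(0)=1$. The three remaining cases $(r_1,r_2)=(1,1),(1,2),(2,2)$ demand, respectively, $81\cdot 5^{-8/3}\ge 3\cdot 5^{-2/3}$, $27\cdot 5^{-2}\ge 1$, and $9\cdot 5^{-4/3}\ge 9\cdot 5^{-4/3}$, which simplify to $27\ge 25$, $27\ge 25$, and an equality. Since every residue class mod $3$ contains elements $\ge 2$, no case is excluded by the hypothesis $m_1,m_2\in\mathbb{N}\setminus\{1\}$.

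There is no substantive obstacle; the proof is essentially a bookkeeping exercise. The content of the lemma lies in two coincidences: the narrow margin $27/25>1$, and the exact identity $(3\cdot 5^{-2/3})^{2}=9\cdot 5^{-4/3}$ that yields sharpness in the $(2,2)$ case (so the lemma is tight on small inputs such as $(m_1,m_2)=(2,2)$, where $g(2)^2=9=g(4)$). A reader preferring to avoid fractional powers could carry out the nine direct case comparisons on $g$ itself, using $g(k+3)=5\,g(k)$ to reduce each comparison to one of the three numerical inequalities above; the analysis is logically identical.
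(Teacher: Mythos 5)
Your proof is correct, and it is essentially the same argument as the paper's: a finite case analysis over the residues of $m_1,m_2$ modulo $3$, with the cases involving a multiple of $3$ giving equality and the remaining cases reducing to $27\geq 25$. Factoring out $5^{k/3}$ to isolate the periodic factor $h$ is only a cosmetic normalization of the computation the paper does directly.
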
 
\begin{proof} 
If $m_1\equiv m_2\equiv 0\pmod 3$, then $g(m_1)g(m_2)=5^{m_1/3}\cdot 5^{m_2/3}=$ \\$5^{(m_1+m_2)/3}=g(m_1+m_2)$. If $m_1\equiv m_2\equiv 1\pmod 3$, then $g(m_1)g(m_2)=81\cdot5^{(m_1+m_2-8)/3}>3\cdot5^{(m_1+m_2-2)/3}=g(m_1+m_2)$. The other cases are handled similarly. 
\end{proof} 
\begin{lemma} \label{Lem2.2}
There are no odd numbers in class $1$. Furthermore, if $x$ is an even number in class $m$, then $v_2(x)\neq m$. 
\end{lemma}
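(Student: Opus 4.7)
\textbf{Proof plan for Lemma \ref{Lem2.2}.}

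The plan is to handle the two assertions separately, using the definition of $\overline{\psi}$ directly for the first and the complete additivity of $D$ for the second. First I would dispose of the claim that no odd number lies in class $1$. Since $\lambda(1)=0$, it suffices to consider odd $n>1$. Writing $n=\prod_i p_i^{\alpha_i}$ with each $p_i$ odd, multiplicativity gives
\[
\overline{\psi}(n)=\prod_i p_i^{\alpha_i-1}(p_i+1).
\]
Each odd prime $p_i\geq 3$ contributes an even factor $p_i+1\geq 4$, so $\overline{\psi}(n)\geq 4>2$, whence $\lambda(n)\neq 1$.

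For the second assertion, let $x$ be even with $v_2(x)=a\geq 1$, and write $x=2^a y$ where $y$ is odd. Using the complete additivity of $D$ established from White's theorem, together with $D(2^a)=a$, I would compute
\[
m+1=\lambda(x)+1=D(x)=D(2^a)+D(y)=a+D(y).
\]
Since $y$ is odd, $D(y)=\lambda(y)$, so $\lambda(y)=m+1-a$. Now suppose for contradiction that $a=m$; then $\lambda(y)=1$, meaning $y$ is odd and in class $1$. This contradicts the first part of the lemma (noting that $y=1$ satisfies $\lambda(1)=0\neq 1$), so $v_2(x)\neq m$.

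There is no real obstacle here; the argument is short once one observes the arithmetic identity $\overline{\psi}(n)\geq 4$ for odd $n>1$ and feeds the factorization $x=2^a y$ into the additivity of $D$. The only subtlety is to remember the conventions $\lambda(1)=0$ and $D(2^a)=a$ so that the bookkeeping at the boundary cases ($y=1$, or $x$ a pure power of $2$) is correct; for instance, the pure power $x=2^m$ has $v_2(x)=m$ but lies in class $m-1$, which is precisely consistent with the lemma.
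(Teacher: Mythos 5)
Your proposal is correct and follows essentially the same route as the paper: the first claim comes from observing $\overline{\psi}(n)>2$ for odd $n>1$ (the paper via $\overline{\psi}(n)=\psi(n)>n$, you via the explicit product formula), and the second from writing $x=2^a y$ with $y$ odd and using the complete additivity of $D$ to reduce to the first claim. Your explicit attention to the boundary cases $y=1$ and $x=2^m$ is a nice touch but not a substantive difference.
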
 
\begin{proof} 
First, note that $1$ is in class $0$. If $n$ is in class $1$, then $\overline{\psi}(n)=2$. This is impossible if $n$ is odd because $\overline{\psi}(n)=\psi(n)>n$ for all odd integers $n>1$. In fact, the only number in class $1$ is $4$. Suppose $x$ is an even positive integer satisfying $v_2(x)=m$. We may write $x=2^mt$ for some odd $t$. Then $\lambda(x)=D(2^mt)-1=D(2^m)+D(t)-1=m+\lambda(t)-1$. Because $t$ is odd, it is not in class $1$. This means that $\lambda(t)\neq 1$, so $\lambda(x)\neq m$.      
\end{proof} 
The following lemma is quite simple, but we record it for future reference. 
\begin{lemma} \label{Lem2.3}
If $n$ is an odd positive integer, then $n$ and $2n$ are in the same class. 
\end{lemma}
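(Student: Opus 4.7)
My plan is to reduce the statement immediately to the additivity of $D$, which was recalled in the introduction. The key inputs are: (i) $D$ is completely additive by White's theorem; (ii) $D(2)=1$; and (iii) by the very definition of $D$, we have $\lambda(m)=D(m)$ when $m$ is odd and $\lambda(m)=D(m)-1$ when $m$ is even.

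Given an odd positive integer $n$, the first step is to note that $\lambda(n)=D(n)$ by (iii). For $2n$, which is even, I would apply (iii) in the other form together with complete additivity to obtain
\[
\lambda(2n)=D(2n)-1=D(2)+D(n)-1=1+D(n)-1=D(n)=\lambda(n),
\]
which is exactly the claim. The only edge case worth a sentence is $n=1$: then $\lambda(1)=0$ by convention and $\lambda(2)=0$ since $\overline{\psi}^{0}(2)=2$, so the formula is still consistent.

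There is really no obstacle here; the whole content of the lemma is absorbed into the additivity of $D$, and the proof amounts to unwinding the definition of $\lambda$ in terms of $D$ in the two relevant parity cases. If a more direct (non-additive) argument were desired, one could instead observe that $\overline{\psi}(2n)=\overline{\psi}(2)\,\overline{\psi}(n)=1\cdot\overline{\psi}(n)=\overline{\psi}(n)$ by multiplicativity (since $\gcd(2,n)=1$) and the fact that $\overline{\psi}(2)=2^{0}=1$, so that the trajectories of $n$ and $2n$ coincide from the first iterate onward; comparing iteration lengths then yields $\lambda(2n)=\lambda(n)$. I would stick with the additive argument, however, since it is the shortest and most in line with the framework just set up.
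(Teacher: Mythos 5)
Your proof is correct, and its main line is a genuinely different (though closely related) route from the paper's. You derive the lemma from the complete additivity of $D$: $\lambda(2n)=D(2n)-1=D(2)+D(n)-1=D(n)=\lambda(n)$, which is exactly the style of computation the paper itself uses in Lemma \ref{Lem2.2} and Theorem \ref{Thm2.1}, so it sits naturally in the framework; its only cost is that it leans on White's theorem (cited, not reproved) for the additivity of $D$. The paper instead argues directly and self-containedly: it checks $n=1$ by hand (both $1$ and $2$ are in class $0$) and, for odd $n>1$, uses $\overline{\psi}(2n)=\overline{\psi}(2)\overline{\psi}(n)=\overline{\psi}(n)$, so the trajectories agree after one step and $\lambda(2n)=1+\lambda(\overline{\psi}(2n))=1+\lambda(\overline{\psi}(n))=\lambda(n)$ --- which is precisely the alternative you sketch in your closing paragraph. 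One small remark: in your $D$-based argument the case $n=1$ needs no special treatment at all, since $D(1)=0$ makes the same chain of equalities give $\lambda(2)=0=\lambda(1)$; the separate check of $n=1$ is genuinely needed only in the trajectory-based argument (as in the paper), where $1$ never reaches $2$ and $\lambda(1)=0$ holds by convention. Either way, both proofs are complete and correct.
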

\begin{proof}
Observe that $1$ and $2$ are both in class $0$, so the result holds for $n=1$. If $n>1$, then the result is an easy consequence of the fact that $\overline{\psi}(n)=\overline{\psi}(2n)$.  
\end{proof}   
\begin{theorem} \label{Thm2.1} 
Let $k>1$ be an integer. The smallest odd number in class $k$ is $g(k)$, and the smallest even number in class $k$ is $2g(k)$. 
\end{theorem}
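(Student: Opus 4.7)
The plan is to prove both claims by strong induction on $k \geq 2$, establishing the odd case first at each step and then using it in the even case.

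To start, I verify that $g(k)$ lies in class $k$. Iterating $\overline{\psi}$ gives $D(3)=2$, $D(5)=3$, and $D(9)=4$, so by complete additivity of $D$ a direct substitution in each of the three residue classes of $k \bmod 3$ yields $D(g(k)) = k$. Since $g(k)$ is odd this gives $\lambda(g(k)) = k$, and Lemma \ref{Lem2.3} gives $\lambda(2g(k))=k$. The base case $k=2$ is immediate: the only odd positive integer less than $3$ is $1$, which is in class $0$, while the only even positive integers less than $6$ are $2$ and $4$, in classes $0$ and $1$ respectively.

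For the odd minimum at $k \geq 3$, let $n$ be odd with $\lambda(n)=k$. If $n$ is composite, factor $n=ab$ with $1<a,b<n$ both odd. Then $\lambda(a)+\lambda(b)=k$ by additivity, and Lemma \ref{Lem2.2} forces $\lambda(a), \lambda(b) \geq 2$. The inductive hypothesis gives $a \geq g(\lambda(a))$ and $b \geq g(\lambda(b))$, so Lemma \ref{Lem2.1} yields $n \geq g(\lambda(a))g(\lambda(b)) \geq g(k)$. If instead $n$ is prime, then $\overline{\psi}(n) = n+1$ is even and in class $k-1$, so induction gives $n+1 \geq 2g(k-1)$; the numerical check $2g(k-1)-1 \geq g(k)$ in each residue class of $k \bmod 3$ then yields $n \geq g(k)$.

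For the even minimum at $k \geq 3$, let $\lambda(m) = k$ and write $m = 2^a t$ with $t$ odd and $a \geq 1$, so $a + \lambda(t) = k+1$. When $t=1$ we have $m = 2^{k+1}$ and need $2^k \geq g(k)$. When $t>1$, set $j := \lambda(t)$; Lemma \ref{Lem2.2} and $a \geq 1$ give $2 \leq j \leq k$, and for $j<k$ the inductive hypothesis, while for $j=k$ the odd case just proven, both yield $t \geq g(j)$, so $m \geq 2^{k+1-j} g(j)$. Both required estimates — namely $2^k \geq g(k)$ and $2^{k-j} g(j) \geq g(k)$ for $2 \leq j \leq k$ — follow by iterating the auxiliary inequality $2g(j) \geq g(j+1)$, itself a one-line residue-class check (for instance, when $j \equiv 2 \pmod 3$, $2g(j) = 6 \cdot 5^{(j-2)/3} \geq 5 \cdot 5^{(j-2)/3} = g(j+1)$), starting from $2^2 = 4 \geq 3 = g(2)$.

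The main obstacle is really just the careful bookkeeping of the two small inequalities $2g(k-1)-1 \geq g(k)$ and $2g(j) \geq g(j+1)$, each of which splits into three residue cases modulo $3$. Once these are in hand, Lemma \ref{Lem2.1} and the additivity of $D$ do all the structural work of reducing class $k$ to smaller classes, and the prime-versus-composite dichotomy for odd $n$ (together with the pure-power-of-$2$ versus mixed dichotomy for even $m$) exhausts the cases.
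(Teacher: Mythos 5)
Your proposal is correct and follows essentially the same route as the paper: the same strong induction, the same prime/composite split for the odd minimum, and the same $2^a t$ decomposition for the even minimum, with Lemmas \ref{Lem2.1}--\ref{Lem2.3} doing the structural work. The only differences are cosmetic streamlinings — you verify $\lambda(g(k))=k$ by a direct additivity computation instead of induction, and you replace the paper's case split on $\alpha\in\{1,2\}$ versus $\alpha>2$ (and its contradiction argument for $t=1$) by iterating the single inequality $2g(j)\geq g(j+1)$, which is a valid and slightly cleaner way to finish the even case.
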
 
\begin{proof} 
We first use induction to prove that $g(k)$ is indeed in class $k$. We have $\lambda(g(2))=\lambda(3)=2$, $\lambda(g(3))=\lambda(5)=3$, and $\lambda(g(4))=\lambda(9)=4$, so $\lambda(g(k))=k$ for all $k\in\{2,3,4\}$. Let $r>4$ be an integer, and suppose that $\lambda(g(k))=k$ if $k\in\{2,3,\ldots,r-1\}$.  Because $g(r)$ is odd, $\lambda(g(r))=D(g(r))=D(5g(r-3))=D(5)+D(g(r-3))=D(5)+r-3=r$. By induction, we see that $g(k)$ is in class $k$ for any integer $k>1$. Therefore, $2g(k)$ is also in class $k$ by Lemma \ref{Lem2.3}.  
\par 
It is easy to verify that the theorem holds if $k\in\{2,3,4\}$. Therefore, let $m>4$ be a positive integer, and suppose the theorem holds whenever $k<m$. Let $n$ and $s$ be the smallest odd number and the smallest even number, respectively, in class $m$. We have already shown that $n\leq g(m)$ and $s\leq 2g(m)$, so we wish to prove the opposite inequalities. Assume, first, that $n$ is a prime. Then $m=\lambda(n)=1+\lambda(\overline{\psi}(n))=1+\lambda(n+1)$, so $n+1$ is in class $m-1$. By the induction hypothesis and the fact that $n+1$ is even, we have $n+1\geq 2g(m-1)$. It is easy to see from the definition of $g$ that $2g(m-1)\geq g(m)+1$, so $n\geq g(m)$. Assume, now, that $n$ is composite. We may write $n=n_1n_2$, where $n_1$ and $n_2$ are odd integers satisfying $1<n_1\leq n_2<n$. Write $\lambda(n_1)=m_1$ and $\lambda(n_2)=m_2$. We have $m=\lambda(n)=D(n)=D(n_1)+D(n_2)=m_1+m_2$. We know that $m_1$ and $m_2$ are each less than $m$ because $n$ was assumed to be the smallest odd number in class $m$ (or, alternatively, because $m_1$ and $m_2$ must be positive because $n_1$ and $n_2$ are each greater than $2$). By the induction hypothesis, $n_1\geq g(m_1)$, and $n_2\geq g(m_2)$. Using Lemma \ref{Lem2.1}, we may conclude that $n=n_1n_2\geq g(m_1)g(m_2)\geq g(m_1+m_2)=g(m)$. This proves that $n=g(m)$. 
\par 
Now, write $s=2^{\alpha}t$, where $t$ is odd. We have $m=\lambda(2^{\alpha}t)=D(2^{\alpha}t)-1=D(2^{\alpha})+D(t)-1=\lambda(t)+\alpha-1$, so $\lambda(t)=m-(\alpha-1)$. If $t=1$, then $\alpha=m+1>5$. However, it is easy to see that $2^{\alpha-2}\cdot3$ would then be an even integer in class $m$ that is smaller than $s$. This is a contradiction, so $t>1$. Lemma \ref{Lem2.2} guarantees that $m-(\alpha-1)>1$ because $t$ is an odd number in class $m-(\alpha-1)$. Because we know from the induction hypothesis (and the preceding paragraph in the case $\alpha=1$) that the smallest odd number in class $m-(\alpha-1)$ is $g(m-(\alpha-1))$, we have $t\geq g(m-(\alpha-1))$. If $\alpha=1$, then we have obtained the desired inequality $s=2t\geq 2g(m)$. As mentioned in the preceding paragraph, $2g(m-1)>g(m)$. Hence, if $\alpha=2$, then $s=2^2t\geq 2^2g(m-1)>2g(m)$. Finally, suppose $\alpha>2$. It is easy to see that $g(\alpha-1)\leq 2^{\alpha-1}$. Using Lemma \ref{Lem2.1}, we have $g(\alpha-1)g(m-(\alpha-1))\geq g(m)$. Hence, $s=2\cdot2^{\alpha-1}t\geq 2\cdot2^{\alpha-1}g(m-(\alpha-1))\geq2g(\alpha-1)g(m-(\alpha-1))\geq2g(m)$.   
\end{proof} 
We may use Theorem \ref{Thm2.1} to prove the following more general result.  
\begin{theorem} 
Let $a$ and $k$ be positive integers with $k>\lambda(a)+1$. The smallest multiple of $a$ in class $k$ is $ag(k-\lambda(a))$. If $a$ is odd, then the only multiple of $a$ in class $\lambda(a)+1$ is $4a$. If $a$ is even, then the only multiple of $a$ in class $\lambda(a)+1$ is $2a$. 
\end{theorem}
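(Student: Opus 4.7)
The strategy is to parametrize multiples of $a$ as $an$ with $n\in\mathbb{N}$ and use the complete additivity $D(an)=D(a)+D(n)$ to convert the condition $\lambda(an)=k$ into a condition on $\lambda(n)$, after which Theorem~\ref{Thm2.1} and Lemma~\ref{Lem2.2} will locate the minimal $n$.

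First I would unpack $\lambda(an)=k$ in each parity case, using $\lambda(x)=D(x)$ when $x$ is odd and $\lambda(x)=D(x)-1$ when $x$ is even. If $a$ is odd, then $D(a)=\lambda(a)$ and $an$ is even exactly when $n$ is; a short computation then collapses the condition to $\lambda(n)=k-\lambda(a)$, independent of the parity of $n$. If $a$ is even, then $D(a)=\lambda(a)+1$ and $an$ is automatically even, producing two subcases: $n$ odd with $\lambda(n)=k-\lambda(a)$, or $n$ even with $\lambda(n)=k-\lambda(a)-1$.

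For the first assertion I would take $k>\lambda(a)+1$ and set $m=k-\lambda(a)\geq 2$. When $a$ is odd, Theorem~\ref{Thm2.1} directly yields $n=g(m)$ as the smallest positive integer in class $m$. When $a$ is even I would compare the smallest admissible odd $n$, namely $g(m)$, with the smallest admissible even $n$: this is $2g(m-1)$ when $m\geq 3$ and is the unique class-$1$ element $4$ when $m=2$. The inequality $2g(m-1)\geq g(m)+1$ already recorded in the proof of Theorem~\ref{Thm2.1}, together with the direct check $g(2)=3<4$, shows that the odd option is strictly smaller in every subcase, so the smallest multiple is $ag(k-\lambda(a))$ for $a$ of either parity.

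For the second assertion, setting $k=\lambda(a)+1$ squeezes $\lambda(n)$ down to $0$ or $1$. If $a$ is odd, the case analysis above gives $\lambda(n)=1$, and Lemma~\ref{Lem2.2} (together with the remark in its proof that $4$ is the only element of class $1$) forces $n=4$, so the only multiple is $4a$. If $a$ is even, the odd subcase $\lambda(n)=1$ is ruled out by Lemma~\ref{Lem2.2}, and the even subcase requires $\lambda(n)=0$ with $n$ even, which forces $n=2$; the unique multiple is then $2a$. The only real obstacle is parity bookkeeping, and no inequality beyond what is used in the proof of Theorem~\ref{Thm2.1} is needed.
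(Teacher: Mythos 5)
Your proposal is correct and takes essentially the same approach as the paper: complete additivity of $D$ with a parity case split, Theorem~\ref{Thm2.1} for the minimal elements of class $k-\lambda(a)$ (or $k-\lambda(a)-1$ in the doubly even case), the inequality $2g(m-1)>g(m)$, and Lemma~\ref{Lem2.2} together with the facts that $4$ is the only number in class $1$ and $2$ the only even number in class $0$. The only difference is organizational: you minimize over the cofactor $n$, while the paper bounds an arbitrary multiple $am$ from below.
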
 
\begin{proof} 
First note that $\lambda(ag(k-\lambda(a)))=\lambda(a)+\lambda(g(k-\lambda(a)))=k$ because $g(k-\lambda(a))$ is an odd number in class $k-\lambda(a)$. Hence, $ag(k-\lambda(a))$ is indeed in class $k$. Let $am$ be a multiple of $a$ that is in class $k$. We wish to show that $am\geq ag(k-\lambda(a))$. Suppose that $a$ is odd or $m$ is odd. Then $\lambda(m)=\lambda(am)-\lambda(a)=k-\lambda(a)$. This means that $m$ is in class $k-\lambda(a)$, so $m\geq g(k-\lambda(a))$ by Theorem \ref{Thm2.1}. Thus, $am\geq ag(k-\lambda(a))$. Suppose, now, that $a$ and $m$ are both even. Then $\lambda(m)=D(m)-1=D(am)-D(a)-1=\lambda(am)-D(a)=k-D(a)=k-\lambda(a)-1$, so $m$ is an even number in class $k-\lambda(a)-1$. If $k=\lambda(a)+2$, then we must have $m=4$ because $4$ is the only number in class $1$. In this case, $am=4a>3a=ag(k-\lambda(a))$. 
Thus, let us suppose $k>\lambda(a)+2$. Theorem \ref{Thm2.1} yields the inequality $m\geq 2g(k-\lambda(a)-1)$. Recall that $2g(x-1)>g(x)$ for any integer $x>1$, so $m>g(k-\lambda(a))$. This completes the proof of the first statement of the theorem. 
\par 
Suppose $a$ is odd and $at$ is a multiple of $a$ in class $\lambda(a)+1$. Then $\lambda(t)=\lambda(at)-\lambda(a)=1$, so $t=4$. 
\par 
Suppose $a$ is even and $au$ is a multiple of $a$ in class $\lambda(a)+1$. If $u$ is odd, then $\lambda(u)=\lambda(au)-\lambda(a)=1$, which contradicts Lemma \ref{Lem2.2}. Thus, $u$ must be even. Then $\lambda(u)=D(u)-1=D(au)-D(a)-1=\lambda(au)-D(a)=\lambda(a)+1-D(a)=0$. This implies that $u=2$ because $2$ is the only even number in class $0$.    
\end{proof} 
\begin{theorem} \label{Thm2.2}
Let $k>1$ be an integer. The largest even number in class $k$ is $2^{k+1}$, and the largest odd number in class $k$ is less than $2^k$.  
\end{theorem}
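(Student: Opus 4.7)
The plan is to prove both claims together by strong induction on $k$, exploiting the complete additivity of $D$ together with the identity $\overline{\psi}(p) = p+1$ for odd primes $p$. The easy half is already in hand: since $D(2^{k+1}) = k+1$ and $2^{k+1}$ is even, $\lambda(2^{k+1}) = k$, so $2^{k+1}$ does lie in class $k$. The base case $k = 2$ is checked directly, since a class-$2$ number $n$ must satisfy $D(n)\in\{2,3\}$, and the complete list is $3$, $6$, and $8$.

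For the inductive step (with $k \geq 3$, assuming the theorem for all $j \in \{2,\ldots,k-1\}$), I would establish the odd bound first and then leverage it for the even case. For the odd bound, take $n$ odd with $\lambda(n) = k$. If $n = p$ is prime, then $p+1 = \overline{\psi}(p)$ is an even member of class $k-1 \geq 2$, so the inductive hypothesis gives $p+1 \leq 2^k$. If $n = n_1 n_2$ with $n_1, n_2 > 1$ odd, then additivity gives $\lambda(n_1) + \lambda(n_2) = k$; each summand is at least $2$ (every odd integer greater than $1$ has $D$-value at least $2$, since every odd prime does by Lemma \ref{Lem2.2} and the above prime case) and strictly less than $k$, so by induction $n_i < 2^{\lambda(n_i)}$, and multiplying yields $n < 2^k$.

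For the even bound, write $n = 2^\alpha t$ with $t$ odd and $\alpha \geq 1$; additivity gives $\alpha + D(t) - 1 = k$. If $t = 1$, then $n = 2^{k+1}$, saturating the bound. Otherwise $t>1$ is odd and sits in some class $j$, which Lemma \ref{Lem2.2} forces to be at least $2$, and $\alpha + j = k+1$. The odd bound, applied at class $j$ (which is either strictly less than $k$, handled by induction, or equal to $k$, handled by the step just completed), gives $t < 2^j$, whence $n = 2^\alpha t < 2^{\alpha+j} = 2^{k+1}$.

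The only delicate point I foresee is bookkeeping: I must order the argument so that the odd-bound for class $k$ is in hand when it is invoked in the even-bound argument for class $k$, and I must verify via Lemma \ref{Lem2.2} that no recursive application of the inductive hypothesis lands on class $1$, where it would not apply. Everything else is routine use of complete additivity.
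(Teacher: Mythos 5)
Your proof is correct, but it takes a genuinely different route from the paper. You argue by strong induction on $k$, using the complete additivity of $D$ to split an odd composite $n=n_1n_2$ into lower classes, Lemma \ref{Lem2.2} (plus the fact that class $0$ is $\{1,2\}$) to guarantee each odd factor greater than $1$ lies in class at least $2$, the prime case $p\mapsto p+1$ to pass to the even bound one class down, and finally $n=2^{\alpha}t$ with the odd bound at class $k+1-\alpha$ to get the even bound at class $k$; the ordering issues you flag (odd bound before even bound within the same $k$, and never invoking the hypothesis at class $1$) are exactly the right ones, and they are handled correctly. The paper instead gives a short, non-inductive argument: it iterates the pointwise inequalities $\overline{\psi}(n)\geq\frac{1}{2}n$ for $n>1$ and $\overline{\psi}(n)\geq n+1$ for odd $n>1$ along the trajectory, obtaining $2=\overline{\psi}^{\lambda(n)}(n)\geq 2^{-\lambda(n)}n$ in general and $2\geq 2^{-(\lambda(n)-1)}(n+1)$ for odd $n$, which yields both bounds at once without any appeal to additivity, Lemma \ref{Lem2.2}, or induction. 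The paper's approach buys brevity and makes transparent that the bound just reflects the worst-case halving rate of $\overline{\psi}$; your approach buys uniformity with the proof of Theorem \ref{Thm2.1} (same induction-plus-additivity template) and stays entirely within the arithmetic of the function $D$, at the cost of a base-case computation and more careful bookkeeping.
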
 
\begin{proof} Observe that if $n>1$, then $\displaystyle{\overline{\psi}(n)\geq \frac{1}{2}n}$. Similarly, if $\displaystyle{\overline{\psi}(n)>1}$, then $\displaystyle{\overline{\psi}^2(n)\geq \frac{1}{4}n}$. Continuing this same argument, we find that $2=\overline{\psi}^{\lambda(n)}(n)\geq \displaystyle{\frac{1}{2^{\lambda(n)}}n}$. Hence, if $n$ is in class $k$, then $n\leq 2^{k+1}$. Because $2^{k+1}$ is in class $k$, this proves that the largest even number in class $k$ is $2^{k+1}$. Suppose $n>1$ is odd. Then $\overline{\psi}(n)\geq n+1$, so $\overline{\psi}^2(n)\geq \displaystyle{\frac{1}{2}(n+1)}$. Again, we continue this argument until we find that $2=\overline{\psi}^{\lambda(n)}(n)\geq \displaystyle{\frac{1}{2^{\lambda(n)-1}}(n+1)}$. This shows that if $n$ is in class $k$, then $n+1\leq 2^k$. This proves the second claim.     
\end{proof} 
For $k>1$, Theorems \ref{Thm2.1} and \ref{Thm2.2} allow us to divide class $k$ into three useful sections as follows: \[\underbrace{g(k),\ldots,}_{\text{Section I}}\underbrace{2g(k),\ldots,}_{\text{Section II}}2^k,\underbrace{\ldots,2^{k+1}}_{\text{Section III}}.\] More specifically, Section I of class $k$ is the set of numbers in class $k$ that are less than $2g(k)$. All elements of Section I are necessarily odd by Theorem \ref{Thm2.1}. Section II of class $k$ is the set of elements of class $k$ that are greater than or equal to $2g(k)$ and strictly less than $2^k$. Section II of a class may contain both even and odd elements. Finally, Section III of class $k$ is the set of numbers in class $k$ that are greater than $2^k$; all such numbers are necessarily even by Theorem \ref{Thm2.2}. We must define the sections of class $0$ and class $1$ explicitly. We will say that Section I of class $0$ contains only the number $1$, and Section III of class $0$ contains only the number $2$. We will say that Section II of class $0$ is empty. Similarly, Sections I and II of class $1$ will be empty while Section III of class $1$ will contain only the number $4$ (the only number in class $1$).
\par 
Let $S_1$ denote the set of all positive integers that are in Section I of their respective classes, and define $S_2$ and $S_3$ similarly for numbers in Section II and Section III, respectively. For each nonnegative integer $k\neq 1$, let $B(k)$ denote the largest odd number in class $k$ (observe that such a number must exist because $1$ is in class $0$ and $g(k)$ is an odd number in class $k$ when $k>1$). Let $\mathcal B=\{B(k)\colon k\in\mathbb{N}_0\backslash\{1\}\}$. Theorem \ref{Thm2.2} states that $B(k)<2^k$ for each integer $k>1$, but it does not give any lower bound for $B(k)$. This leads us to the following lemma and theorem. 
\begin{lemma} \label{Lem2.4}
If $0<p<q$, then $(2^q-1)^p>(2^p-1)^q$. 
\end{lemma}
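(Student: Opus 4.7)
The plan is to recast the inequality in a form where monotonicity of a single-variable auxiliary function does all the work. Dividing both sides of $(2^q - 1)^p > (2^p - 1)^q$ by $2^{pq}$, the desired inequality is equivalent to $(1 - 2^{-q})^p > (1 - 2^{-p})^q$. Since $1 - 2^{-x} \in (0,1)$ for $x > 0$, its logarithm is negative; taking logs and dividing by the positive quantity $pq$, the inequality in turn becomes
\[ \frac{\ln(1 - 2^{-p})}{p} < \frac{\ln(1 - 2^{-q})}{q}. \]
It therefore suffices to show that the function $f(x) = \dfrac{\ln(1 - 2^{-x})}{x}$ is strictly increasing on $(0,\infty)$, and then apply this with $x = p$ and $x = q$.

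To verify monotonicity, I would differentiate. Writing $h(x) = \ln(1 - 2^{-x})$ so that $f(x) = h(x)/x$, the quotient rule yields
\[ f'(x) = \frac{x\, h'(x) - h(x)}{x^{2}}, \qquad h'(x) = \frac{2^{-x} \ln 2}{1 - 2^{-x}}. \]
The point of this particular formulation is that $h'(x) > 0$ while $h(x) < 0$ for every $x > 0$, so both $x\,h'(x)$ and $-h(x)$ are strictly positive. Hence $f'(x) > 0$ throughout $(0,\infty)$, so $f$ is strictly increasing. Reversing the equivalences (exponentiating and then multiplying through by $2^{pq}$) recovers the claimed inequality.

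There is no real obstacle in this plan; the crucial observation that makes the sign of $f'$ immediate is the rewriting $2^x - 1 = 2^x (1 - 2^{-x})$, which converts the logarithm of a large quantity into the logarithm of a quantity less than $1$. A direct attempt to show that $\frac{\ln(2^x - 1)}{x}$ is increasing by differentiating it in that unreduced form requires the non-obvious comparison $\frac{x\cdot 2^x \ln 2}{2^x - 1} > \ln(2^x - 1)$, which is exactly what the change of form sidesteps.
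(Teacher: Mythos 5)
Your proof is correct and is essentially the paper's argument in logarithmic form: the paper shows $f(x)=(2^x-1)^{1/x}$ is increasing by differentiating, and your function $\ln(1-2^{-x})/x$ equals $\ln f(x)-\ln 2$, so you are proving monotonicity of the same quantity. The only (cosmetic) difference is that your normalization by $2^{-pq}$ makes the sign of the derivative immediate, whereas the paper obtains it by bounding $2^x-1<2^x$ inside the derivative expression.
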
 
\begin{proof} 
Let $f(x)=(2^x-1)^{1/x}$. Then \[f'(x)=(2^x-1)^{1/x}\left(\frac{2^x\log 2}{x(2^x-1)}-\frac{\log(2^x-1)}{x^2}\right)\] \[>(2^x-1)^{1/x}\left(\frac{2^x\log 2}{x2^x}-\frac{\log(2^x)}{x^2}\right)=0.\] Therefore, if $0<p<q$, then $f(p)<f(q)$. This then implies that $(2^q-1)^p=f(q)^{pq}>f(p)^{pq}=(2^p-1)^q$. 
\end{proof} 
\begin{theorem} \label{Thm2.3} 
Let $2^p-1$ and $2^q-1$ be Mersenne primes with $p<q$. If $k\geq(p-1)(q-1)$, then $B(k)\geq 2^k(1-2^{-q})^{\frac{k-p(q-1)}{q}}(1-2^{-p})^{q-1}$.  
\end{theorem}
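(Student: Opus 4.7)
The plan is to construct an explicit large odd integer in class $k$ of the form $n=(2^p-1)^a(2^q-1)^b$ and then verify that $n$ already meets the claimed bound. Since $2^p-1$ is prime, $\overline{\psi}(2^p-1)=2^p$, so iterating once and using $D(2^\alpha)=\alpha$ gives $\lambda(2^p-1)=1+\lambda(2^p)=p$; because $2^p-1$ is odd, $D(2^p-1)=p$, and similarly $D(2^q-1)=q$. Complete additivity of $D$ then yields $D(n)=ap+bq$, and since $n$ is odd, $\lambda(n)=D(n)$. Hence $n$ lies in class $k$ as soon as we can arrange $ap+bq=k$ with $a,b\in\mathbb{N}_0$, and then $B(k)\geq n$.

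The first technical ingredient is the Chicken McNugget (Sylvester--Frobenius) theorem: because $p$ and $q$ are distinct primes, $\gcd(p,q)=1$, and the Frobenius number for the pair $(p,q)$ is $pq-p-q=(p-1)(q-1)-1$. Therefore the hypothesis $k\geq(p-1)(q-1)$ guarantees the existence of at least one representation $k=ap+bq$ with $a,b\geq 0$; by performing the shift $(a,b)\mapsto(a-q,b+p)$ repeatedly we may further normalize so that $0\leq a\leq q-1$, keeping $b\geq 0$ throughout.

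The second step is the size estimate. Writing $2^p-1=2^p(1-2^{-p})$ and $2^q-1=2^q(1-2^{-q})$ gives $n=2^k(1-2^{-p})^a(1-2^{-q})^b$, so the goal reduces to
\[(1-2^{-p})^a(1-2^{-q})^b\geq(1-2^{-p})^{q-1}(1-2^{-q})^{(k-p(q-1))/q}.\]
Substituting $b=(k-ap)/q$ and taking logarithms causes the $k$-dependence to cancel, and the inequality collapses to
\[\tfrac{(q-1)-a}{q}\bigl(p\log(1-2^{-q})-q\log(1-2^{-p})\bigr)\geq 0.\]
The first factor is nonnegative by the normalization $a\leq q-1$ from the previous step, and the second factor is nonnegative if and only if $(1-2^{-q})^p\geq(1-2^{-p})^q$, which upon multiplying through by $2^{pq}$ becomes exactly $(2^q-1)^p\geq(2^p-1)^q$, supplied by Lemma \ref{Lem2.4}. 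The main obstacle is precisely this algebraic reduction: one has to trust, before computing, that the $k$-dependence genuinely cancels when $b$ is eliminated via $k=ap+bq$, and that the residual inequality factors so cleanly into a nonnegative exponent of $(q-1)-a$ times the precise quantity Lemma \ref{Lem2.4} bounds. Once this identity is in hand, every other step is routine bookkeeping with the additivity of $D$.
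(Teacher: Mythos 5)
Your proof is correct and follows essentially the same route as the paper: the same odd witness $(2^p-1)^a(2^q-1)^b$ placed in class $k$ via complete additivity of $D$, and the same reduction of the size estimate to Lemma \ref{Lem2.4}. The only differences are cosmetic: where the paper cites Skupie\'n's proposition to obtain the representation of $k$ with the coefficient of $p$ at most $q-1$, you derive it elementarily from the Sylvester--Frobenius bound together with the shift normalization, and you check the final inequality by taking logarithms rather than by the paper's direct factor-by-factor manipulation.
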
 
\begin{proof} 
Fix some $k\geq (p-1)(q-1)$. A result due to Skupie\'n \cite[Prop. 3.2]{skupien93} implies that there exist unique positive integers $a$ and $b$ with $b\leq q-1$ such that $k=aq+bp$. Let $j=q-1-b$. Then $\displaystyle{a=\frac{k-p(q-1)}{q}+\frac{p}{q}j}$. Observe that $\lambda(2^p-1)=1+\lambda(\overline{\psi}(2^p-1))=1+\lambda(2^p)=p$. Similarly, $\lambda(2^q-1)=q$. Therefore, \[\lambda((2^q-1)^a(2^p-1)^b)=aq+bp=k.\] Because $(2^q-1)^a(2^p-1)^b$ is an odd number in class $k$, we have \[B(k)\geq (2^q-1)^a(2^p-1)^b=2^k(1-2^{-q})^a(1-2^{-p})^b\] \[=2^k(1-2^{-q})^{\frac{k-p(q-1)}{q}+\frac{p}{q}j}(1-2^{-p})^{q-1-j}=(1-2^{-q})^{\frac{p}{q}j}(1-2^{-p})^{-j}L,\] where $L=2^k(1-2^{-q})^{\frac{k-p(q-1)}{q}}(1-2^{-p})^{q-1}$. From Lemma \ref{Lem2.4}, we have \[(2^q-1)^p>(2^p-1)^q,\] so \[(1-2^{-q})^{\frac{p}{q}j}(1-2^{-p})^{-j}L=((2^q-1)^{p})^{\frac{j}{q}}2^{-pj}(1-2^{-p})^{-j}L\] \[>(2^p-1)^j2^{-pj}(1-2^{-p})^{-j}L=L.\]  
\end{proof}  
The numbers $B(k)$ are quite difficult to handle, so a better lower bound currently evades us. Numerical evidence seems to suggest that the elements of $\mathcal B$ (other than $1$) are all primes, but we have no proof or counterexample. We may, however, prove the following fact quite easily. The analogue of the following theorem for Euler's totient function was originally proven by Catlin \cite{catlin70}.
\begin{theorem} \label{Thm2.4} 
If $x\in\mathcal B$, then every positive divisor of $x$ is in $\mathcal B$. 
\end{theorem}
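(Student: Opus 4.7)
The plan is to show that if $x=B(k)$ for some $k$, then every divisor $d$ of $x$ also realizes the maximum odd integer in its own class, so that $d=B(\lambda(d))\in\mathcal B$. The argument is by contradiction, and essentially the only tool I need beyond the definitions is the complete additivity of $D$.

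First I would observe that every element of $\mathcal B$ is odd by definition, hence if $x\in\mathcal B$ and $d\mid x$, then both $d$ and the cofactor $e:=x/d$ are odd. Because $D$ is completely additive and agrees with $\lambda$ on odd integers, this gives
\[\lambda(x)=D(x)=D(d)+D(e)=\lambda(d)+\lambda(e).\]

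Now suppose for contradiction that $d\notin\mathcal B$. Then $d$ is not the largest odd number in class $\lambda(d)$, so there exists an odd integer $d'>d$ with $\lambda(d')=\lambda(d)$. Setting $x':=d'e$, the number $x'$ is odd, strictly larger than $x$, and the same additivity argument yields
\[\lambda(x')=\lambda(d')+\lambda(e)=\lambda(d)+\lambda(e)=\lambda(x).\]
This contradicts the defining property of $x=B(\lambda(x))$ as the largest odd integer in class $\lambda(x)$.

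I do not expect a real obstacle here; the statement is really just a consequence of the multiplicativity encoded in the additivity of $D$. The only points worth a brief sanity check are the boundary cases: when $d=1$ we have $\lambda(d)=0$ and $B(0)=1$, so $1\in\mathcal B$; and for any odd $d$, Lemma \ref{Lem2.2} guarantees $\lambda(d)\neq 1$, so $B(\lambda(d))$ is always well-defined. The only mild subtlety is keeping the distinction between $D$ and $\lambda$ straight and invoking additivity only on odd arguments, where the two functions coincide.
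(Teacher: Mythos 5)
Your proof is correct and follows essentially the same route as the paper: assume a divisor $d$ of $x$ is not in $\mathcal B$, replace it by a larger odd number in the same class, and use additivity of $\lambda$ on odd integers (via $D$) to produce an odd number in class $\lambda(x)$ exceeding $x$, a contradiction. Your explicit checks that $d=1$ is harmless and that Lemma \ref{Lem2.2} guarantees $\lambda(d)\neq 1$ are minor refinements the paper leaves implicit.
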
 
\begin{proof} 
Let $x\in \mathcal{B}$, and suppose $d$ is a positive divisor of $x$ that is not in $\mathcal{B}$. As $x\in\mathcal B$, $x$ must be odd. This implies that $d$ is odd, so there must be some odd number $m>d$ that is in the same class as $d$. Write $x=dy$. Then $\lambda(x)=\lambda(d)+\lambda(y)=\lambda(m)+\lambda(y)=\lambda(my)$. Then $my$ is an odd number that is in the same class as $x$ and is larger than $x$. This is a contradiction, and the desired result follows. 
\end{proof} 
We may prove a similar result for the numbers in $S_1$. The analogue of the following theorem for Euler's totient function was originally proven by Shapiro \cite{shapiro43}. 
\begin{theorem} \label{Thm2.5}
If $x\in S_1$, then every positive divisor of $x$ is in $S_1$.
\end{theorem}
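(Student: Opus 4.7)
The plan is to mimic the structure of the proof of Theorem \ref{Thm2.4} but to replace the role of "largest odd number in the class" with the explicit size criterion for Section~I. Recall that, for $x > 1$, membership $x \in S_1$ is equivalent to $x$ being odd with $\lambda(x) \geq 2$ and $x < 2g(\lambda(x))$. So the entire proof will consist in verifying this inequality for each nontrivial divisor $d$ of $x$.

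First I would dispose of the degenerate cases. If $x = 1$ or $d = 1$ or $d = x$, then $d \in S_1$ is either trivial or identical to the hypothesis, so I can assume $1 < d < x$ and write $x = dy$ with $1 < y < x$. Since $x \in S_1$ forces $x$ to be odd, both $d$ and $y$ are odd and greater than $1$; then Lemma \ref{Lem2.2} (no odd number lies in class $1$) together with $d,y \neq 1$ gives $\lambda(d), \lambda(y) \geq 2$, which is the range in which $g$ and Lemma \ref{Lem2.1} apply. Additivity of $D$ on the odd integers $x,d,y$ yields $\lambda(x) = \lambda(d) + \lambda(y)$.

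Now the key step is a single chain of inequalities, combining the hypothesis on $x$, Lemma \ref{Lem2.1}, and the minimality statement of Theorem \ref{Thm2.1}:
\[
dy \;=\; x \;<\; 2g(\lambda(x)) \;=\; 2g(\lambda(d)+\lambda(y)) \;\leq\; 2\,g(\lambda(d))\,g(\lambda(y)) \;\leq\; 2\,g(\lambda(d))\,y,
\]
where the last inequality uses that $g(\lambda(y))$ is the smallest odd number in class $\lambda(y)$, so $y \geq g(\lambda(y))$. Dividing by the positive integer $y$ gives $d < 2g(\lambda(d))$, which is exactly the condition $d \in S_1$.

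There is really no substantial obstacle here: every ingredient (additivity of $D$, the submultiplicativity-type inequality for $g$, and the lower bound $y \geq g(\lambda(y))$) is already in hand, and the only mild subtlety is checking the "class $\geq 2$" hypothesis needed to invoke Lemma \ref{Lem2.1} and Theorem \ref{Thm2.1}, which follows immediately from Lemma \ref{Lem2.2}.
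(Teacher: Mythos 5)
Your proposal is correct and follows essentially the same argument as the paper: additivity gives $\lambda(x)=\lambda(d)+\lambda(y)$, Lemma \ref{Lem2.1} gives $g(\lambda(d))g(\lambda(y))\geq g(\lambda(x))$, and Theorem \ref{Thm2.1} gives $y\geq g(\lambda(y))$, which combine with $x<2g(\lambda(x))$ to yield $d<2g(\lambda(d))$. The only difference is cosmetic (you multiply through by $y$ rather than divide, and you spell out the check that $\lambda(d),\lambda(y)\geq 2$, which the paper leaves implicit).
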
 
\begin{proof} 
Let $x\in S_1$, and let $d$ be a positive divisor of $x$. Write $x=dy$. We wish to show that $d$ is in Section I of its class. This is trivial if $d=1$ or $d=x$, so we may assume that $d$ and $y$ are both greater than $1$. As $x\in S_1$, $x$ must be odd. This implies that $d$ and $y$ are odd, so $\lambda(x)=\lambda(d)+\lambda(y)$. Lemma \ref{Lem2.1} then implies that $g(\lambda(d))g(\lambda(y))\geq g(\lambda(x))$. The fact that $x\in S_1$ implies that $x<2g(\lambda(x))$. Furthermore, $y\geq g(\lambda(y))$ by Theorem \ref{Thm2.1}, so $d=x/y<2g(\lambda(x))/y\leq 2g(\lambda(x))/g(\lambda(y))\leq 2g(\lambda(d))$. Hence, $d$ is in Section I of its class.   
\end{proof}
We next determine a bit of information about the structure of $S_3$. We will do so in greater generality than is necessary, and the results pertinent to $S_3$ with follow as corollaries. To this end, define $V(c)=\{n\in\mathbb{N}\colon n>2^{\lambda(n)-c}\}$ for any real number $c$, and observe that $S_3=V(0)$. Also, define $\eta(p)=\lambda(p)-\log_2(p)$ for all odd primes $p$. Note that $\eta(p)$ is always positive by Theorem \ref{Thm2.2}. 
\begin{lemma} \label{Lem2.5} 
Let $r$ and $\alpha$ be positive integers, and let $c$ be a real number. Let $t=p_1^{\alpha_1}\cdots p_r^{\alpha_r}$, where $p_1,\ldots,p_r$ are distinct odd primes and $\alpha_1,\ldots,\alpha_r$ are positive integers. Then $2^{\alpha}t\in V(c)$ if and only if $\displaystyle{\sum_{i=1}^r}\alpha_i\eta(p_i)<c+1$.     
\end{lemma}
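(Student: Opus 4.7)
The plan is to unpack the definition of $V(c)$ and reduce the inequality $2^{\alpha}t > 2^{\lambda(2^{\alpha}t) - c}$ to a statement about $\lambda(2^{\alpha}t) - \log_{2}(2^{\alpha}t)$, then compute this difference using the complete additivity of $D$ and the definition of $\eta$. Concretely, $2^{\alpha}t \in V(c)$ is equivalent, after taking $\log_{2}$, to
\[
\lambda(2^{\alpha}t) - \log_{2}(2^{\alpha}t) < c.
\]

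Next I would compute each side separately. Since $2^{\alpha}t$ is even, $\lambda(2^{\alpha}t) = D(2^{\alpha}t) - 1$, and complete additivity of $D$ together with $D(2^{\alpha}) = \alpha$ and $D(p_i^{\alpha_i}) = \alpha_i D(p_i) = \alpha_i \lambda(p_i)$ (the last equality because $p_i$ is odd) gives
\[
\lambda(2^{\alpha}t) = \alpha - 1 + \sum_{i=1}^{r}\alpha_i \lambda(p_i).
\]
On the other hand, $\log_{2}(2^{\alpha}t) = \alpha + \sum_{i=1}^{r}\alpha_i \log_{2}(p_i)$. Subtracting and using $\eta(p_i) = \lambda(p_i) - \log_{2}(p_i)$ yields
\[
\lambda(2^{\alpha}t) - \log_{2}(2^{\alpha}t) = -1 + \sum_{i=1}^{r}\alpha_i \eta(p_i).
\]

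Plugging this into the equivalent inequality above turns the condition $2^{\alpha}t \in V(c)$ into $\sum_{i=1}^{r}\alpha_i \eta(p_i) < c + 1$, which is exactly the stated criterion. There is no real obstacle here: the result is a direct calculation, and the only point requiring care is applying $D(x) = \lambda(x)$ for odd $x$ while $D(x) = \lambda(x) + 1$ for even $x$, so that the $-1$ term is correctly tracked when passing between $D$ and $\lambda$.
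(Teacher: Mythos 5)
Your proof is correct and follows essentially the same route as the paper: unpack the definition of $V(c)$, compute $\lambda(2^{\alpha}t)=\alpha-1+\sum_{i}\alpha_i\lambda(p_i)$ via the additivity of $D$, and rearrange (after taking $\log_2$) to obtain $\sum_i \alpha_i\eta(p_i)<c+1$. The only cosmetic difference is that you take logarithms at the start rather than after cancelling the power of $2$, which changes nothing substantive.
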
 
\begin{proof} 
By the definition of $V(c)$, $2^{\alpha}t\in V(c)$ if and only if $2^{\alpha}t>2^{\lambda(2^{\alpha}t)-c}$. Now, $\lambda(2^{\alpha}t)=\alpha-1+\displaystyle{\sum_{i=1}^r\alpha_i\lambda(p_i)}$, so $2^{\lambda(2^{\alpha}t)-c}=2^{\alpha-c-1}\displaystyle{\prod_{i=1}^r2^{\alpha_i\lambda(p_i)}}$. Therefore, $2^{\alpha}t\in V(c)$ if and only if $2^{c+1}t>\displaystyle{\prod_{i=1}^r2^{\alpha_i\lambda(p_i)}}$. This inequality is equivalent to \[c+1+\log_2(p_1^{\alpha_1}\cdots p_r^{\alpha_r})>\log_2\left(\prod_{i=1}^r2^{\alpha_i\lambda(p_i)}\right),\] which we may rewrite as $\displaystyle{\sum_{i=1}^r\alpha_i\eta(p_i)<c+1}$.
\end{proof} 
Although the proof of Lemma \ref{Lem2.5} is quite simple, the lemma tells us quite a bit about the factors of elements of $V(c)$. For example, we see that whether or not an even integer is an element of $V(c)$ is independent of the power of $2$ in its prime factorization. Also, any even divisor of an even element of $V(c)$  must itself be in $V(c)$. In the case $c=0$, this provides an analogue of Theorem \ref{Thm2.5} for $S_3$. Furthermore, if $q$ is an odd prime and $k=\displaystyle{\left\lfloor\frac{c+1}{\eta(q)}\right\rfloor}$, then Lemma \ref{Lem2.5} tells us that $2^{\alpha}q^{\beta}\in V(c)$ for all positive integers $\alpha$ and $\beta$ with $\beta\leq k$. On the other hand, $q^{\beta}$ does not divide any even element of $V(c)$ if $\beta>k$. In particular, if $\eta(q)>c+1$ so that $k=0$, then no even element of $V(c)$ is divisible by $q$. This leads us to inquire about the set $T(c)$ of odd primes $q$ satisfying $\eta(q)>c+1$. Let $b_c(x)=\vert\{q\in T(c)\colon q\leq x\}\vert$. One may wish to determine estimates for $b_c(x)$. We have not attempted to do so in great detail; rather, we will simply show that $\displaystyle{\lim_{x\rightarrow\infty}\frac{b_c(x)}{\pi(x)}}=1$. In other words, $T(c)$ has asymptotic density $1$ when regarded as a subset of the set of primes.    
\begin{lemma} \label{Lem2.6} 
Let $c$ be a real number. Let $q\in T(c)$, and let $p$ be an odd prime such that $p\equiv -1\pmod q$. Then $p\in T(c)$. 
\end{lemma}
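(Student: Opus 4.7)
The plan is to translate the congruence condition into a multiplicative decomposition of $p+1$ and then use the additivity of $D$, together with the upper bounds on numbers in a given class from Theorem \ref{Thm2.2}, to compare $\eta(p)$ with $\eta(q)$.

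First I would write $p+1=qm$, where $m=(p+1)/q$ is a positive integer. Note that $m\geq 2$: if we had $m=1$, then $p=q-1$ would be even (since $q$ is odd), contradicting the assumption that $p$ is an odd prime. Since $p$ is prime, $\overline{\psi}(p)=p+1$, so $\lambda(p)=1+\lambda(p+1)$. Because $p+1$ is even, the definition of $D$ gives $\lambda(p+1)=D(p+1)-1$, so by the complete additivity of $D$ we obtain
\[
\lambda(p)=D(p+1)=D(q)+D(m)=\lambda(q)+D(m),
\]
where the last equality uses that $q$ is odd.

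Next I would rewrite this in terms of $\eta$. Subtracting $\log_2 p$ from both sides,
\[
\eta(p)=\lambda(q)+D(m)-\log_2 p=\eta(q)+D(m)-\log_2(p/q).
\]
Since $\eta(q)>c+1$ by hypothesis, it suffices to show that $D(m)\geq\log_2(p/q)$. I would establish the stronger inequality $D(m)\geq\log_2 m$, valid for every positive integer $m$: for $m=1$ both sides are $0$; for $m\geq 2$ odd, Theorem \ref{Thm2.2} gives $m<2^{\lambda(m)}$, hence $\log_2 m<\lambda(m)=D(m)$; for $m\geq 2$ even, Theorem \ref{Thm2.2} gives $m\leq 2^{\lambda(m)+1}$, hence $\log_2 m\leq\lambda(m)+1=D(m)$. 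Applying this with our $m=(p+1)/q\geq 2$ yields
\[
D(m)\geq\log_2 m=\log_2\!\left(\frac{p+1}{q}\right)>\log_2(p/q),
\]
so $\eta(p)>\eta(q)>c+1$, meaning $p\in T(c)$.

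The computation is almost entirely mechanical once the right setup is chosen; the only step that requires any thought is the bound $D(m)\geq\log_2 m$, which is really just a repackaging of the upper bounds in Theorem \ref{Thm2.2}. The mild subtlety is keeping track of the parity of $m$ (so that $D$ and $\lambda$ differ by $1$ in the right place) and verifying that $m\neq 1$ so that the inequality is available; both are handled in the first paragraph above.
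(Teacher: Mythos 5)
Your proof is correct, but it takes a more self-contained route than the paper. The paper gets the lemma almost immediately from Lemma \ref{Lem2.5}: since $\eta(q)>c+1$, the discussion following that lemma shows $q$ divides no even element of $V(c)$, so $p+1\notin V(c)$, i.e., $p+1\leq 2^{\lambda(p+1)-c}$, and then $\lambda(p)=1+\lambda(p+1)\geq c+1+\log_2(p+1)$ yields $\eta(p)>c+1$. You bypass the $V(c)$ machinery entirely: you decompose $p+1=qm$, use the complete additivity of $D$ to obtain $\eta(p)=\eta(q)+D(m)-\log_2(p/q)$, and prove the pointwise bound $D(m)\geq\log_2 m$ from Theorem \ref{Thm2.2}. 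The two arguments rest on the same underlying ingredients (additivity of $D$ plus the $2$-power upper bounds on classes), but yours is independent of Lemma \ref{Lem2.5} and gives the slightly stronger conclusion $\eta(p)>\eta(q)$, whereas the paper's version is shorter because it reuses machinery already in place. One pedantic caveat: Theorem \ref{Thm2.2} is stated only for classes $k>1$, so your case analysis tacitly needs the trivial observations that an odd $m\geq 3$ lies in class at least $2$ (classes $0$ and $1$ contain no odd number exceeding $1$) and that $m\in\{2,4\}$ satisfy $m\leq 2^{\lambda(m)+1}$ directly; both checks are immediate, so there is no real gap.
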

\begin{proof} 
As mentioned in the above discussion, $q$ cannot divide any even element of $V(c)$. Hence, $p+1\not\in V(c)$, which implies that $p+1\leq 2^{\lambda(p+1)-c}$. Therefore, $\lambda(p)=1+\lambda(\overline{\psi}(p))=1+\lambda(p+1)\geq c+1+\log_2(p+1)$, so $\eta(p)=\lambda(p)-\log_2(p)>c+1$.
\end{proof} 
We will also need the following fact, the proof of which follows as an easy consequence of the well-known generalization of Dirichlet's Theorem that states that, for any relatively prime positive integers $k$ and $l$, $\displaystyle{\sum_{\substack{p\leq x \\ p\equiv l\pmod k}}\frac{1}{p}}$ diverges as $x\rightarrow\infty$. 
\begin{fact} \label{Fact2.1} 
For any relatively prime positive integers $k$ and $l$, \[\lim_{x\rightarrow\infty}\left[\prod_{\substack{p\leq x \\ p\equiv l\pmod k}}\left(1-\frac{1}{p}\right)\right]=0.\]
\end{fact}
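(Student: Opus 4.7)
The plan is to reduce the claim to the divergence statement cited just before the fact by taking logarithms. Setting $P(x)=\displaystyle\prod_{\substack{p\leq x \\ p\equiv l\pmod k}}\left(1-\frac{1}{p}\right)$, it suffices to show that $\log P(x)\to -\infty$ as $x\to\infty$. Since $P(x)$ is a finite product of positive reals in $(0,1)$, the logarithm distributes to give $\log P(x)=\displaystyle\sum_{\substack{p\leq x \\ p\equiv l\pmod k}}\log\!\left(1-\frac{1}{p}\right)$.

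Next I would invoke the elementary inequality $\log(1-y)\leq -y$, valid for $0<y<1$ (immediate from the Taylor series $\log(1-y)=-y-y^2/2-y^3/3-\cdots$). Applying this termwise with $y=1/p$ yields
\[
\log P(x)\;\leq\;-\sum_{\substack{p\leq x \\ p\equiv l\pmod k}}\frac{1}{p}.
\]
By the generalization of Dirichlet's theorem mentioned in the paragraph preceding the fact, the right-hand side tends to $-\infty$ as $x\to\infty$ (this is where the hypothesis $\gcd(k,l)=1$ is used, since otherwise the sum on the right has only finitely many terms). Consequently $\log P(x)\to -\infty$, and exponentiating gives $P(x)\to 0$, as desired.

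There is no real obstacle here; the only point requiring any care is ensuring that the sum on the right-hand side genuinely diverges, which is precisely the content of the cited generalization of Dirichlet's theorem under the coprimality hypothesis. Everything else is a one-line comparison.
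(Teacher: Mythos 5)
Your proof is correct and matches the paper's intended argument: the paper states Fact \ref{Fact2.1} as an easy consequence of the divergence of $\sum_{p\leq x,\ p\equiv l\ (\mathrm{mod}\ k)}1/p$, and your logarithm comparison via $\log(1-y)\leq -y$ is exactly the standard way to fill in that step.
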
 
\begin{theorem} \label{Thm2.6} 
Let $c$ be a real number. With $b_c$ defined as above, we have $\displaystyle{\lim_{x\rightarrow\infty}\frac{b_c(x)}{\pi(x)}=1}$.  
\end{theorem}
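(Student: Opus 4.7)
The plan is to show that the complement of $T(c)$ within the odd primes has relative density zero. The key engine is the contrapositive of Lemma \ref{Lem2.6}: if $p$ is an odd prime with $p \notin T(c)$, then $p \not\equiv -1 \pmod{q}$ for every $q \in T(c)$.

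First I would fix some prime $q_0 \in T(c)$. (Such a $q_0$ must exist for the conclusion to hold; one exhibits explicit examples in the cases of greatest interest, e.g., $q_0 = 53$ handles $c = 0$ since $\lambda(53) = 7 > 1 + \log_2 53$.) Lemma \ref{Lem2.6} then shows that every odd prime $q \equiv -1 \pmod{q_0}$ belongs to $T(c)$. For each $y > q_0$, set $Q_y = \{q \leq y : q \text{ is an odd prime with } q \equiv -1 \pmod{q_0}\}$. The contrapositive noted above implies that any prime $p$ in the complement of $T(c)$ must satisfy $p \not\equiv -1 \pmod{q}$ for every $q \in Q_y$.

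Let $M = \prod_{q \in Q_y} q$. By the Chinese remainder theorem, the number of residues $r \pmod{M}$ with $\gcd(r,M) = 1$ and $r \not\equiv -1 \pmod{q}$ for every $q \in Q_y$ is exactly $\prod_{q \in Q_y}(q-2)$, out of the $\varphi(M) = \prod_{q \in Q_y}(q-1)$ residues coprime to $M$. Dirichlet's theorem on primes in arithmetic progressions (in density form) asserts that each coprime residue class modulo $M$ receives relative density $1/\varphi(M)$ among the primes. Summing over the permitted residues and combining with the inclusion above yields
\[\limsup_{x \to \infty} \frac{\pi(x) - b_c(x)}{\pi(x)} \leq \prod_{q \in Q_y} \left(1 - \frac{1}{q-1}\right) \leq \prod_{q \in Q_y}\left(1 - \frac{1}{q}\right).\]

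Letting $y \to \infty$ and applying Fact \ref{Fact2.1} with modulus $k = q_0$ and residue class $l = q_0 - 1$ (coprime to $q_0$ since $q_0$ is an odd prime), the rightmost product tends to $0$. Hence the limsup equals $0$, and therefore $\lim_{x \to \infty} b_c(x)/\pi(x) = 1$. The main obstacle, I expect, is producing a prime $q_0 \in T(c)$ for every real $c$; once such a $q_0$ is in hand, the remainder is a fairly mechanical combination of Lemma \ref{Lem2.6}, Chinese remaindering plus Dirichlet's theorem, and Fact \ref{Fact2.1}.
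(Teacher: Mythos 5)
Your density argument—restricting attention to primes $\equiv -1 \pmod{q}$ for $q$ in a growing set $Q_y \subseteq T(c)$, counting the excluded residue classes via the Chinese Remainder Theorem and Dirichlet's theorem, and killing the resulting product $\prod_{q \in Q_y}(1-1/q)$ with Fact \ref{Fact2.1}—is essentially the paper's argument, and it is correct as far as it goes (the finitely many primes dividing the modulus are harmless, and your inequality $\frac{q-2}{q-1} \leq 1 - \frac{1}{q}$ goes the right way). But there is a genuine gap at the starting point: you never establish that $T(c)$ contains a prime $q_0$ for an arbitrary real $c$. Saying such a $q_0$ ``must exist for the conclusion to hold'' is circular—the nonemptiness of $T(c)$ is part of what has to be proved—and the example $q_0 = 53$ only settles $c = 0$ (and, a fortiori, $c \leq 0$); for large $c$ you need primes $p$ with $\lambda(p) - \log_2 p$ exceeding an arbitrarily large bound $c+1$, and nothing in your proposal produces them.

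The paper closes exactly this hole, and it is the only nontrivial new input in its proof beyond the machinery you already invoked. For $c \leq -1$ the theorem is trivial since $\eta(p) > 0$ for every odd prime. For $c > -1$, set $k = \left\lfloor \frac{c+1}{\eta(3)} \right\rfloor$ and use Dirichlet's theorem to pick a prime $\omega = 2\ell \cdot 3^{k+1} - 1$. Applying Lemma \ref{Lem2.5} to $\omega + 1 = 2\ell \cdot 3^{k+1}$, the contribution $(k+1)\eta(3) > c+1$ forces $\omega + 1 \notin V(c)$, i.e., $\omega + 1 \leq 2^{\lambda(\omega+1)-c} = 2^{\lambda(\omega)-c-1}$, whence $\eta(\omega) = \lambda(\omega) - \log_2(\omega) > c+1$ and $\omega \in T(c)$. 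In short: your reduction is fine, but the ``main obstacle'' you flagged is precisely the step the proof cannot do without, and it requires Lemma \ref{Lem2.5} (via the divisibility obstruction from high powers of $3$) together with Dirichlet's theorem, not just an ad hoc example.
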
 
\begin{proof}
If $c\leq -1$, then the result is trivial because every odd prime is in $T(c)$. Therefore, let us assume that $c>-1$. We first need to show that $T(c)$ is nonempty. Let $k=\displaystyle{\left\lfloor\frac{c+1}{\eta(3)}\right\rfloor=\left\lfloor\frac{c+1}{2-\log_2(3)}\right\rfloor}$. By Dirichlet's Theorem, there exist infinitely many primes in the arithmetic progression $2\cdot3^{k+1}-1,4\cdot3^{k+1}-1,6\cdot3^{k+1}-1,\ldots$. Let $\omega$ be one such prime, and write $\omega=2\ell\cdot3^{k+1}-1$. If we replace $2^{\alpha}t$ with $2\ell\cdot3^{k+1}$ in Lemma \ref{Lem2.5}, then we see that $\displaystyle{\sum_{i=1}^r\alpha_i\eta(p_i)\geq (k+1)\eta(3)}$, where we have used the notation from Lemma \ref{Lem2.5}. By the definition of $k$, $(k+1)\eta(3)>c+1$. Lemma \ref{Lem2.5} then tells us that $2\ell\cdot3^{k+1}=\omega+1\not\in V(c)$. Hence, $\omega+1\leq 2^{\lambda(\omega+1)-c}=2^{\lambda(\overline{\psi}(\omega))-c}=2^{\lambda(\omega)-c-1}$. This yields the inequality $\log_2(\omega+1)\leq \lambda(\omega)-c-1$, which implies that $\eta(\omega)=\lambda(\omega)-\log_2(\omega)>c+1$. Hence, $\omega\in T(c)$, so $T(c)$ is nonempty. 
\par
Now, choose some $\delta,\epsilon>0$. Let $q_0$ be the smallest element of $T(c)$. For any positive real number $y$, let $Q(y)$ be the set of odd primes that are less than or equal to $y$ and that are congruent to $-1$ modulo $q_0$. By Lemma \ref{Lem2.6}, $Q(y)\subseteq T(c)$. Let us choose $y$ large enough so that $\displaystyle{\prod_{q\in Q(y)}\left(1-\frac{1}{q}\right)}<\delta$ (Fact \ref{Fact2.1} guarantees that we may do so). For any $x>y$, let $A(x,y)$ be the set of odd primes that are less than or equal to $x$ and that are congruent to $-1$ modulo $q$ for some $q\in Q(y)$. Using Lemma \ref{Lem2.6} again, we see that $A(x,y)\subseteq T(c)$. As every element of $A(x,y)$ is less than or equal to $x$, $\vert A(x,y)\vert\leq b_c(x)$. Therefore, if we can show that $\displaystyle{\lim_{x\rightarrow\infty}\frac{\vert A(x,y)\vert}{\pi(x)}=1}$, then we will be done. 
\par 
Let $\mathcal P=\displaystyle{\prod_{q\in Q(y)}q}$. Let $W$ be the set of positive integers less than or equal to $\mathcal P$ that are not congruent to $0$ or $-1$ modulo $q$ for all $q\in Q(y)$. Then $\pi(x)-\vert A(x,y)\vert$ is the number of primes that are less than or equal to $x$ and that are congruent to $w$ modulo $\mathcal P$ for some $w\in W$. That is, $\pi(x)-\vert A(x,y)\vert=\displaystyle{\sum_{w\in W}\pi_w(x)}$, where $\pi_w(x)$ denotes the number of primes less than or equal to $x$ that are congruent to $w$ modulo $\mathcal P$. By the Chinese Remainder Theorem, $\displaystyle{\vert W\vert=\prod_{q\in Q(y)}(q-2)}$. One version of Dirichlet's Theorem tells us that $\displaystyle{\frac{\pi_w(x)}{\pi(x)}=\frac{1}{\varphi(\mathcal P)}(1+o(1))}$ for all $w\in W$. Hence, we may choose $x$ large enough so that $\displaystyle{\frac{\pi_w(x)}{\pi(x)}<\frac{1}{\varphi(\mathcal P)}+\epsilon}$ for all $w\in W$. With such a choice of $x$, we have \[\frac{\pi(x)-\vert A(x,y)\vert}{\pi(x)}=\sum_{w\in W}\frac{\pi_w(x)}{\pi(x)}< \sum_{w\in W}\left(\frac{1}{\varphi(\mathcal P)}+\epsilon\right)=\frac{\vert W\vert}{\varphi(\mathcal P)}+\vert W\vert\epsilon\] 
\[=\prod_{q\in Q(y)}\left(\frac{q-2}{q-1}\right)+\vert W\vert\epsilon<\prod_{q\in Q(y)}\left(1-\frac{1}{q}\right)+\vert W\vert\epsilon<\delta+\vert W\vert\epsilon.\] Because $\vert W\vert$ does not depend on $x$ or $\epsilon$, $\displaystyle{\lim_{x\rightarrow\infty}\frac{\pi(x)-\vert A(x,y)\vert}{\pi(x)}=0}$. Therefore, \[\lim_{x\rightarrow\infty}\frac{\vert A(x,y)\vert}{\pi(x)}=1.\]
\end{proof} 

\end{document}